\documentclass[a4paper,11pt,reqno,intlimits]{extarticle}
\linespread{1}
\usepackage[cp1251]{inputenc}
\usepackage[T2A]{fontenc}
\usepackage[english,russian]{babel}
\usepackage{amsfonts,amsmath,amsxtra,amsthm,amssymb,latexsym,mathrsfs,graphicx,setspace}
\usepackage{multirow}

\usepackage[ruled]{algorithm2e}

\SetAlCapHSkip{0pt}
\IncMargin{-\parindent}

\pagestyle{plain} \sloppy \textwidth=155mm \textheight=225mm
\hoffset=-14mm \voffset=-25mm

\theoremstyle{plain} 
\newtheorem{theorem}{Теорема}
\newtheorem{lemma}{Лемма}

\newtheorem{corollary}{Следствие}

\theoremstyle{definition} 
\newtheorem{remark}{Замечание}

\numberwithin{equation}{section} \numberwithin{theorem}{section}
\numberwithin{lemma}{section} \numberwithin{proposition}{section}
\numberwithin{corollary}{section} \numberwithin{remark}{section}
\numberwithin{definition}{section} \numberwithin{example}{section}

\newcommand*{\affaddr}[1]{#1} 

\begin{document}

\renewcommand{\thesection}{}
\renewcommand{\thesubsection}{\arabic{subsection}}
\renewcommand{\thetheorem}{\arabic{theorem}}
\renewcommand{\theequation}{\arabic{equation}}
\renewcommand{\thedefinition}{\arabic{definition}}
\renewcommand{\thecorollary}{\arabic{corollary}}
\renewcommand{\thelemma}{\arabic{lemma}}
\renewcommand{\theexample}{\arabic{example}}
\renewcommand{\theremark}{\arabic{remark}}

\title{\textbf{
Адаптивный аналог метода Ю.~Е.~Нестерова для вариационных неравенств с сильно монотонным оператором\\
Some adaptive analog of Yu.~E.~Nesterov's method for variational inequalities with a strongly monotone operator
}}
\author{
\emph{Федор~Сергеевич~Стонякин}\\\emph{Fedor~Sergeevich~Stonyakin}\\\\\small
\affaddr{\textbf{Крымский федеральный университет имени В.~И.~Вернадского},\\\small Проспект Академика Вернадского, 4, Симферополь, 295007;}\\
\affaddr{\textbf{V.I. Vernadsky Crimean Federal University},\\\small 4 V. Vernadsky ave., Simferopol, 295007.}\\
\small e-mail: fedyor@mail.ru
}
\maketitle

\selectlanguage{russian}

Вариационные неравенства (ВН) нередко возникают в самых разных проблемах оптимизации и имеют многочисленные приложения в математической экономике, математическом моделировании транспортных потоков, теории игр и других разделах математики (см., например \cite{FaccPang_2003, Antipin_2017}).

Наиболее известным аналогом градиентного метода для ВН является экстраградиентный метод Г. М. Корпелевич \cite{Korpelevich}. Одним из современных вариантов экстраградиентного метода является проксимальный зеркальный метод А. С. Немировского \cite{Nemirovski_2004}. Недавно Ю. Е. Нестеровым в \cite{Nesterov_2015} предложен новый адаптивный метод решения задач выпуклой минимизации, который в случае липшицевости градиента целевой функции не требует знания никакой верхней оценки $\widehat{L} \geqslant L$ для этой константы Липшица $L$. На базе идеологии \cite{Nesterov_2015} в (\cite{Gasn_2017}, замечание 5.1 из раздела 5; см. также \cite{Gasn}) предложен похожий адаптивный аналог проксимального зеркального метода А. С. Немировского для вариационных неравенств с оператором, удовлетворяющим условию Липшица.

Хорошо известно, что в задачах оптимизации замена условия выпуклости функционала на сильную выпуклость приводит к существенно лучшей скорости сходимости методов. Аналогичный эффект имеет место и для вариационных неравенств, если оператор обладает свойством сильной монотонности. В \cite{Nesterov_2007, Nesterov_Doct} был предложен метод для вариационных неравенств с сильно монотонным липшицевым оператором. Этот метод представляет собой комбинацию двойственного экстраполяционного метода \cite{Nesterov_2005} и методики оценочных функций (см. раздел 2.2 из \cite{Nesterov_2010}).

В предлагаемой работе мы, в некоторой степени отталкиваясь от идеологии работы \cite{Gasn}, предложим адаптивный аналог метода Ю.~Е.~Нестерова для вариационных неравенств с липшицевым и сильно монотонным оператором, реализация которого не требует знания никакой верхней оценки $\hat{L} \geqslant L$ константы Липшица $L$ оператора $g$.

Будем рассматривать задачу нахождения решения $x^*=x^*(Q)$ вариационного неравенства
\begin{equation}\label{eq1}
\langle g(x^*),x^*-y\rangle\leqslant0\quad \forall y\in Q,
\end{equation}
где $g:Q\rightarrow\mathbb{R}^n$~--- сильно монотонный оператор с параметром $\mu>0$:
\begin{equation}\label{eq2}
\langle g(x)-g(y),x-y\rangle\geqslant\mu||x-y||^2\quad\forall x,y\in Q,
\end{equation}
$Q$~--- выпуклое замкнутое подмножество $\mathbb{R}^n$, $\langle\cdot,\cdot\rangle$~--- скалярное произведение в $\mathbb{R}^n$,
\begin{equation}\label{eq_norm}
\|x\| = \langle Bx, x\rangle^{1/2}
\end{equation}
есть некоторая евклидова норма в $\mathbb{R}^n$, где $B: \mathbb{R}^n \rightarrow \mathbb{R}^n$ --- фиксированный оператор $B = B^{T} > 0$. Будем полагать, что оператор $g$ удовлетворяет условию Липшица:
\begin{equation}\label{eq3}
||g(x)-g(y)||_*\leqslant L||x-y||\;\forall x,y\in Q
\end{equation}
для некоторой константы $L>0$, $||s||_* = \langle s, B^{-1}s\rangle^{1/2}$.

Напомним некоторые вспомогательные оценки, понятия и результаты из п. 3.2 диссертации Ю.~Е.~Нестерова \cite{Nesterov_Doct}. Отметим, что сильная монотонность $g$ означает, что для решения $x^*$ верны оценки при произвольном $y\in Q$:
\begin{equation}\label{eq4}
\langle g(y),x^*-y\rangle+\frac{\mu}{2}||y-x^*||^2\leqslant\langle g(x^*),x^*-y\rangle-\frac{\mu}{2}||y-x^*||^2\leqslant0.
\end{equation}

Неравенства \eqref{eq4} приводят к идее рассматривать следующую меру близости для оценки качества найденного приближённого решения $x$ ВН \eqref{eq1}:
\begin{equation}\label{eq5}
f(x)=\sup_{y\in Q}\left\{\langle g(y),x-y\rangle+\frac{\mu}{2}||y-x||^2\right\}.
\end{equation}

Отметим основные свойства $f$ из \eqref{eq5}.

\begin{theorem}\label{th1}
(Ю.~Е.~Нестеров, \cite{Nesterov_Doct}) Функция $f$ из \eqref{eq5} определена и сильно выпукла на $Q$ с параметром $\mu$. Более того, для всякого $x\in Q$ $f(x) \geqslant 0$ и $f(x)=0\Leftrightarrow x=x^*$.
\end{theorem}

Пусть в ходе работы некоторого алгоритма образовалась последовательность $\{y_i\}_{i=0}^{N}\subset Q$ и $\{\lambda_i\}_{i=0}^{N}$~--- некоторый набор положительных чисел. Тогда обозначим
\begin{equation}\label{eq6}
S_N=\sum\limits_{i=0}^{N}\lambda_i\text{ и }\widetilde{y}_N:=\frac{1}{S_N}\sum_{i=0}^{N}\lambda_i y_i \text{ --- усредненный выход работы алгоритма.}
\end{equation}

Неравенства \eqref{eq4} приводят к идее ввести следующую функцию зазора для оценки качества найденного решения:
\begin{equation}\label{eq7}
\Delta_N:=\max_{x\in Q}\left\{\sum\limits_{i=0}^{N}\lambda_i\left[\langle g(y_i),y_i-x\rangle-\frac{\mu}{2}||x-y_i||^2\right]\right\}.
\end{equation}

\begin{lemma}\label{lemma}
(Ю.~Е.~Нестеров, \cite{Nesterov_Doct}) Справедливо неравенство: $f(\widetilde{y}_N)\leqslant\frac{\Delta_N}{S_N}$.
\end{lemma}

Вслед за \cite{Nesterov_Doct} обозначим
\begin{equation}\label{eq8}
\varphi_{y}^{\beta}(x):=\langle g(y),y-x\rangle-\frac{\beta}{2}||x-y||^2, \quad
\Phi_k(x):=\sum_{i=0}^{k}\lambda_i\varphi_{y_i}^{\mu}(x)
\end{equation}
для произвольного параметра $\beta>0,\;k=0,1,2,\ldots$, а также $x,y\in Q$. Ясно, что функция $\varphi_{y}^{\beta}$ сильно вогнута с параметром $\beta$, а $\Phi_k(x)$ сильно вогнута с параметром $\mu S_k$. Заметим, что при этом $(k=0,1,2,\ldots,N)$
\begin{equation}
\Delta_k=\max_{x\in Q}\Phi_k(x).
\end{equation}

Напомним метод Ю.~Е. Нестерова для ВН с липшицевым сильно монотонным оператором \cite{Nesterov_2007, Nesterov_Doct}.
Опишем $(k+1)$-ю итерацию этого метода $(k=0,1,2,\ldots)$.

\begin{algorithm}
\SetAlgoNoLine
\caption{Метод для ВН с сильно монотонным оператором}
\label{alg1}
  $x_k:=\mathrm{arg}\max\limits_{x\in Q}\Phi_k(x)$, $y_{k+1}:=\mathrm{arg}\max\limits_{x\in Q}\varphi_{x_k}^{L}(x)$, $\lambda_{k+1}:=\frac{\mu}{L}S_k$.\\
    {\bf Выход:} $\widetilde{y}_{k+1}:=\frac{1}{S_{k+1}}\sum\limits_{i=0}^{k+1}\lambda_i y_i$.

\end{algorithm}

Для приведённого выше метода (алгоритм \ref{alg1}) согласно теореме 3.2.3 из \cite{Nesterov_Doct} в случае липшицева оператора $g$  с константой $L$ для числа обусловленности $\gamma=\frac{L}{\mu}$ и произвольного натурального $k$ верны оценки:
\begin{equation}\label{equvv11}
\frac{\mu}{2}||\widetilde{y}_k-x^*||^2\leqslant f(\widetilde{y}_k)\leqslant\left[f(y_0)+\frac{\mu(\gamma^2-1)}{2}||y_0-x^*||^2\right] \exp \left(-\frac{k}{\gamma+1}\right)\leqslant
\end{equation}
\begin{equation}\label{equvv12}
\leqslant
f(y_0)\cdot\gamma^2\cdot \exp \left(-\frac{k}{\gamma+1}\right).
\end{equation}

\begin{remark}
В конце пункта 3.2.2 \cite{Nesterov_Doct} было проведено сравнение алгоритма \ref{alg1} со стандартным проекционным методом вида:
\begin{equation}\label{eq 1}
x_{0}=\overline{x}\in Q,
\end{equation}
\begin{equation}\label{eq 2}
x_{k+1}=\pi_{Q}(x_{k}-\lambda B^{-1}g(x_{k})),\quad k\geq 0,
\end{equation}
где $\pi_{Q}(x)$ --- евклидова проекция точки $x$ на множество $Q$. Как отмечено в (\cite{Nesterov_Doct}, конец п. 3.2.2), у этого метода может быть медленная сходимость. В частности, при выборе оптимального шага $\displaystyle \lambda=\frac{\mu}{L^{2}}$ верна оценка:
\begin{equation}\label{Nest_est}
\|x_{k+1}-x^{*}\|^{2}\leq \|x_{k}-x^{*}\|^{2}\cdot\exp\left\{-\frac{k}{\gamma^{2}}\right\}.
\end{equation}
При больших значениях числа обусловленности $\gamma=\frac{L}{\mu}$ эта оценка может быть значительно хуже, чем \eqref{equvv11} -- \eqref{equvv12}. Также известно, что скорость сходимости \eqref{equvv11} -- \eqref{equvv12} не может быть улучшена никаким черноящичным методом, применяемым к задаче \eqref{eq1} -- \eqref{eq2} (см. замечание в конце п. 3.2.2 \cite{Nesterov_Doct}). В то же время с точки зрения сложности реализации метод Ю. Е. Нестерова не будет значительно сложнее метода \eqref{eq 1} -- \eqref{eq 2}: на каждой итерации требуется вычислить две проекции на множество и два значения оператора вместо одной проекции и одного значения в методе \eqref{eq 1} -- \eqref{eq 2}.
\end{remark}

Теперь перейдём к основным результатам работы и предложим адаптивный аналог метода Ю. Е. Нестерова для ВН \eqref{eq1} -- \eqref{eq2}. Положим изначально $\lambda_0:=1$, $y_0$~--- некоторое начальное приближение искомого решения и выберем некоторое $0<\beta_0\leqslant2L$, где $L$~--- константа Липшица для оператора $g$ из \eqref{eq3}.

\begin{remark}
Ввиду сильной монотонности оператора $g$ для произвольных различных $x$ и $y$ из множества $Q$ верно $g(x) \neq g(y)$. Поэтому выполнения условия $\beta_0 \leqslant 2 L$ можно добиться, выбрав
\begin{equation}\label{eqnullbeta}
\beta_0 := \frac{\|g(x) - g(y)\|_{*}}{\|x - y\|}
\end{equation}
для некоторых фиксированных различных $x$ и $y$ из $Q$.
\end{remark}

Опишем $(k+1)$-ю итерацию предлагаемого метода $(k=0,1,2,\ldots)$.
\newpage

\begin{algorithm}
\SetAlgoNoLine
\caption{Адаптивный метод для ВН с сильно монотонным оператором}
\label{alg2}
\begin{enumerate}
  \item $x_k:=\mathrm{arg}\max\limits_{x\in Q}\Phi_k(x), \;\beta_{k+1}:=\frac{\beta_k}{2}$.\\
  \item $y_{k+1}:=\mathrm{arg}\max\limits_{x\in Q}\varphi_{x_k}^{\beta_{k+1}}(x)$.\\
  \item {\bf Если} верно
    \begin{equation}\label{eq11}
    ||g(y_{k+1})-g(x_k)||_*\leqslant \sqrt{\beta_{k+1}(\beta_{k+1}+\mu)} \cdot ||y_{k+1}-x_k||,
    \end{equation}
    {\bf то} вычисляем $\lambda_{k+1}:=\frac{\mu}{\beta_{k+1}}S_k$, увеличиваем k на 1 и переходим к следующей итерации (п. 1).\\
    {\bf Иначе} $\beta_{k+1}:=2\cdot\beta_{k+1}$ и переходим к п. 2.\\
    {\bf Выход:} $\widetilde{y}_{k+1}:=\frac{1}{S_{k+1}}\sum\limits_{i=0}^{k+1}\lambda_i y_i$.
\end{enumerate}.
\end{algorithm}

\begin{remark}
При $\beta_{k+1} \geqslant L$ критерий выхода из итерации \eqref{eq11} заведомо выполнен, т.к.
$\sqrt{L(L+\mu)} > L $ при всяком $\mu > 0$. Поэтому после завершения итерации алгоритма \ref{alg2}
заведомо верно неравенство:
\begin{equation}\label{eq111}
    \beta_{k+1} < 2 L.
\end{equation}
Таким образом, константа $\beta_{k+1}$ не может неограниченно увеличится и максимальное её значение будет сопоставимо с $L$.
\end{remark}
\begin{remark}
Аналогично рассуждениям из (\cite{Nesterov_2015}, стр. 391) оценим количество операций п. 2 алгоритма \ref{alg2}. Пусть на $(k+1)$-й итераций их было $i_{k+1}$. Тогда ввиду деления на 2 в п. 1 алгоритма \ref{alg2} мы имеем:
\begin{equation}\label{equiv_iter_1}
\beta_{k+1} = \frac{1}{2} 2^{i_{k+1}-1} \beta_{k} = 2^{i_{k+1} - 2} \beta_{k},
\end{equation}
откуда
\begin{equation}\label{equiv_iter_2}
\sum\limits_{k = 0}^{N-1}i_{k+1} = 2 N + \sum\limits_{k = 0}^{N-1}\log_{2} \frac{\beta_{k+1}}{\beta_{k}} < 2 N + \log_{2} (2L) - \log_{2} (\beta_0).
\end{equation}

Таким образом, что за счёт повторения вычислений в п. 2 сложность работы предлагаемого алгоритма \ref{alg2} по сравнению с алгоритмом \ref{alg1} может увеличится не более, чем в 2 раза с точностью до постоянного слагаемого, зависящего от $\beta_0$ и $L$. Это означает, что трудоёмкость предлагаемого метода вполне сопоставима с трудоёмкостью исходного алгоритма \ref{alg1}. Однако при этом не требуется знания никакой константы $\widehat{L} \geqslant L$. Преимуществом также является возможное существенное увеличение скорости сходимости метода в конкретных задачах (см., например, таблицу \ref{tab1} ниже).
\end{remark}

Справедлива следующая
\begin{theorem}\label{th2}
При выполнении алгоритма \ref{alg2} для величин $\Delta_{k}$ из \eqref{eq7} верно неравенство $\Delta_{k+1}\leqslant\Delta_k$ для всякого целого неотрицательного $k$.
\end{theorem}
\begin{proof}
Ясно, что $\Phi_{k+1}(x)=\Phi_k(x)+\lambda_{k+1}\varphi_{y_{k+1}}^{\mu}(x)$. Тогда
$$
\Delta_{k+1}=\max_{x\in Q}\left\{\Phi_k(x)+\lambda_{k+1}\varphi_{y_{k+1}}^{\mu}(x)\right\}\leqslant
$$
$$
\leqslant\Delta_k+\max_{x\in Q}\left\{\langle\nabla\Phi_k(x_k),x-x_k\rangle-\frac{1}{2}\mu S_k||x-x_k||^2+\lambda_{k+1}\varphi_{y_{k+1}}^{\mu}(x)\right\}\leqslant
$$
$$
\leqslant\Delta_k+\max_{x\in Q}\left\{-\frac{1}{2}\mu S_k||x-x_k||^2+\lambda_{k+1}\left[\langle g(y_{k+1}),y_{k+1}-x\rangle-\frac{1}{2}\mu||x-y_{k+1}||^2\right]\right\}.
$$
В силу выбора $y_{k+1}$ из п. 2 алгоритма \ref{alg2} для всякого $x\in Q$ имеем:
$$
\langle-g(x_k)-\beta_{k+1} B(y_{k+1}-x_k),x-y_{k+1}\rangle\leqslant 0.
$$
Далее, с учётом равенства
\begin{equation}2\cdot \langle B(y_{k+1}-x_k),x-y_{k+1}\rangle = ||x-x_k||^2-||y_{k+1}-x_k||^2-||x-y_{k+1}||^2
\end{equation}
мы получаем оценки
$$
\langle g(y_{k+1}),y_{k+1}-x\rangle-\frac{\mu}{2}||x-y_{k+1}||^2=
$$
$$
=\langle g(y_{k+1})-g(x_k),y_{k+1}-x\rangle-\frac{\mu}{2}||x-y_{k+1}||^2+\langle g(x_k),y_{k+1}-x\rangle\leqslant
$$
$$
\leqslant ||g(y_{k+1})-g(x_k)||_*\cdot||y_{k+1}-x||-\frac{\mu}{2}||x-y_{k+1}||^2+\beta_{k+1}\langle B(y_{k+1}-x_k),x-y_{k+1}\rangle =
$$
$$
= ||g(y_{k+1})-g(x_k)||_*\cdot||y_{k+1}-x||-\frac{\mu}{2}||x-y_{k+1}||^2+ \frac{\beta_{k+1}}{2}||x-x_k||^2-\frac{\beta_{k+1}}{2}||y_{k+1}-x_k||^2-
$$
$$-\frac{\beta_{k+1}}{2}||x-y_{k+1}||^2 = ||g(y_{k+1})-g(x_k)||_*\cdot||y_{k+1}-x||-\frac{\beta_{k+1} + \mu}{2}||x-y_{k+1}||^2+
$$
$$
+ \frac{\beta_{k+1}}{2}||x-x_k||^2-\frac{\beta_{k+1}}{2}||y_{k+1}-x_k||^2
\leqslant - \frac{1}{2\mu} \left(||g(y_{k+1})-g(x_k)||_* - ||y_{k+1}-x)||\right)^2 +
$$
$$
+ \frac{||g(y_{k+1})-g(x_k)||^2_*}{2\cdot(\beta_{k+1} + \mu)} + \frac{\beta_{k+1}}{2}\left(||x-x_k||^2-||y_{k+1}-x_k||^2\right) \leqslant
$$
$$
\leqslant \frac{||g(y_{k+1})-g(x_k)||^2_*}{2\cdot(\beta_{k+1} + \mu)} + \frac{\beta_{k+1}}{2}\left(||x-x_k||^2-||y_{k+1}-x_k||^2\right).
$$
Поэтому ввиду \eqref{eq11}
$$
\langle g(y_{k+1}),y_{k+1}-x\rangle-\frac{\mu}{2}||x-y_{k+1}||^2\leqslant\frac{1}{2(\beta_{k+1} + \mu)}||g(y_{k+1})-g(x_k)||_*^2+
$$
$$
+\frac{\beta_{k+1}}{2}||x-x_k||^2 - \frac{\beta_{k+1}}{2}||y_{k+1}-x_k||^2 \leqslant \frac{\beta_{k+1}}{2}||x-x_k||^2,
$$
откуда с учетом $\mu S_k=\lambda_{k+1}\beta_{k+1}$ получаем требуемое.
\end{proof}

\begin{corollary}\label{cor1}
При выполнении алгоритма \ref{alg2} верно неравенство $f(\widetilde{y}_k)\leqslant\Delta_0 \exp \left(-\frac{k\mu}{\mu+\hat{\beta}}\right)$ для всякого натурального $k$, где $\hat{\beta}$ определяется следующим образом:
\begin{equation}\label{mean_beta}
1-\frac{\mu}{\mu+\hat{\beta}} = \sqrt[k]{\left(1-\frac{\mu}{\mu+\beta_1}\right)\left(1-\frac{\mu}{\mu+\beta_2}\right)\ldots\left(1-\frac{\mu}{\mu+\beta_k}\right)}.
\end{equation}
\end{corollary}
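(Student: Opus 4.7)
The plan is to combine the monotonicity $\Delta_{k+1}\leq\Delta_k$ from Theorem~\ref{th2} with the bound $f(\widetilde{y}_k)\leq\Delta_k/S_k$ from the preceding Lemma, and to evaluate $S_k$ explicitly using the step-size rule $\lambda_{k+1}=\frac{\mu}{\beta_{k+1}}S_k$ built into Algorithm~\ref{alg2}.

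First, I would observe that the update rule together with $S_0=\lambda_0=1$ produces the telescoping identity
$$S_{k+1}=S_k+\lambda_{k+1}=S_k\cdot\frac{\mu+\beta_{k+1}}{\beta_{k+1}},\qquad S_k=\prod_{i=1}^{k}\frac{\mu+\beta_i}{\beta_i},$$
so that
$$\frac{1}{S_k}=\prod_{i=1}^{k}\left(1-\frac{\mu}{\mu+\beta_i}\right).$$
Next, iterating Theorem~\ref{th2} yields $\Delta_k\leq\Delta_0$, and inserting this into the Lemma gives
$$f(\widetilde{y}_k)\leq\frac{\Delta_k}{S_k}\leq\frac{\Delta_0}{S_k}=\Delta_0\prod_{i=1}^{k}\left(1-\frac{\mu}{\mu+\beta_i}\right).$$

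By the defining relation \eqref{mean_beta} of $\hat{\beta}$ the product on the right equals $\bigl(1-\frac{\mu}{\mu+\hat{\beta}}\bigr)^{k}$, and applying the elementary inequality $1-x\leq e^{-x}$ with $x=\frac{\mu}{\mu+\hat{\beta}}\in(0,1)$ yields
$$f(\widetilde{y}_k)\leq\Delta_0\left(1-\frac{\mu}{\mu+\hat{\beta}}\right)^{k}\leq\Delta_0\exp\left(-\frac{k\mu}{\mu+\hat{\beta}}\right),$$
as required.

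There is no serious obstacle here: the whole argument is a three-line assembly of already-established facts (monotonicity of $\Delta_k$, the Lemma bound, and the product form of $S_k$ implied by the choice of $\lambda_{k+1}$). The only point to verify is that each factor $1-\frac{\mu}{\mu+\beta_i}$ lies strictly in $(0,1)$, which is automatic from $\beta_i,\mu>0$; this both justifies treating $\hat{\beta}$ as a well-defined ``geometric-mean'' parameter and legitimizes the use of $1-x\leq e^{-x}$ in the final step.
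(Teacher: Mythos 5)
Your proposal is correct and follows essentially the same route as the paper: the recursion $S_{k+1}=\bigl(1+\frac{\mu}{\beta_{k+1}}\bigr)S_k$, the bound $f(\widetilde{y}_k)\leqslant\Delta_k/S_k\leqslant\Delta_0/S_k$ via Theorem~\ref{th2} and the Lemma, the identification of $1/S_k$ with the product $\prod_{i=1}^{k}\bigl(1-\frac{\mu}{\mu+\beta_i}\bigr)$, and finally $1-x\leqslant e^{-x}$. The only cosmetic difference is that the paper telescopes $\frac{S_0}{S_1}\cdots\frac{S_{k-1}}{S_k}$ instead of writing $S_k$ as an explicit product and inverting.
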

\begin{proof}
Действительно, $S_0=\lambda_0=1$ и для всякого $k=0,1,\ldots$ верно
$$
S_{k+1}=S_k+\lambda_{k+1}=\left(1+\frac{\mu}{\beta_{k+1}}\right)S_k.
$$
Далее, по лемме \ref{lemma}
$$
f(\widetilde{y}_k)\leqslant\frac{\Delta_k}{S_k}\leqslant\Delta_0\cdot\frac{S_0}{S_1}\cdot\frac{S_1}{S_2}\ldots\cdot\frac{S_{k-1}}{S_k}=
$$
$$
=\Delta_0\left(1-\frac{\mu}{\mu+\beta_1}\right)\left(1-\frac{\mu}{\mu+\beta_2}\right)\ldots\left(1-\frac{\mu}{\mu+\beta_k}\right) =
$$
$$
= \Delta_0\left(1-\frac{\mu}{\mu+\widehat{\beta}}\right)^k = \Delta_0\left(1-\frac{1}{1+\frac{\hat{\beta}}{\mu}}\right)^k\leqslant\Delta_0 \exp \left(-\frac{k\mu}{\mu+ \hat{\beta}} \right),
$$
что и требовалось.
\end{proof}

Из теорем \ref{th1} и \ref{th2}, а также следствия \ref{cor1} вытекает следующий результат, аналогичный теореме 3.2.3 из \cite{Nesterov_Doct}.

\begin{theorem}
Пусть оператор $g$ липшицев с константой $L > 0$ и сильно монотонен с параметром $\mu > 0$. Тогда при выполнении алгоритма \ref{alg2} для $\gamma=\frac{L}{\mu}$ и всякого натурального $k$ верны оценки:
\begin{equation}\label{equv11}
\frac{\mu}{2}||\widetilde{y}_k-x^*||^2\leqslant f(\widetilde{y}_k)\leqslant\left[f(y_0)+\frac{\mu(\gamma^2-1)}{2}||y_0-x^*||^2\right] \exp \left(-\frac{k}{1+\frac{\hat{\beta}}{\mu}}\right)\leqslant
\end{equation}
\begin{equation}\label{equv12}
\leqslant
f(y_0)\cdot\gamma^2\cdot \exp \left(-\frac{k}{1+\frac{\hat{\beta}}{\mu}}\right).
\end{equation}
\end{theorem}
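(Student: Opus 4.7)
The plan is to reduce the statement to three inequalities: the leftmost and rightmost are essentially immediate from Theorem~\ref{th1}, whereas the central one follows from Corollary~\ref{cor1} once a sharp upper bound for $\Delta_0$ is produced.

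For the leftmost inequality I would invoke Theorem~\ref{th1}: the auxiliary gap $f$ is $\mu$-strongly convex on $Q$, the point $x^*$ is its minimiser, and $f(x^*)=0$; this forces $f(y)\geqslant\tfrac{\mu}{2}\|y-x^*\|^2$ for every $y\in Q$, and setting $y=\widetilde{y}_k$ yields the claim. For the rightmost inequality I would apply the same bound at the initial point $y_0$, obtaining $\tfrac{\mu}{2}\|y_0-x^*\|^2\leqslant f(y_0)$, so that the expression in square brackets does not exceed $f(y_0)+(\gamma^2-1)f(y_0)=\gamma^2 f(y_0)$.

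The substantial work is the central inequality. Corollary~\ref{cor1} already supplies
\[
f(\widetilde{y}_k)\leqslant\Delta_0\exp\!\left(-\frac{k}{1+\hat{\beta}/\mu}\right),
\]
so it remains to prove
\[
\Delta_0\leqslant f(y_0)+\frac{\mu(\gamma^2-1)}{2}\|y_0-x^*\|^2.
\]
I would denote by $x_0$ the maximiser of $\Phi_0=\varphi_{y_0}^{\mu}$ on $Q$, so that $\Delta_0=\langle g(y_0),y_0-x_0\rangle-\tfrac{\mu}{2}\|x_0-y_0\|^2$, and decompose the linear term as
\[
\langle g(y_0),y_0-x_0\rangle=\langle g(y_0)-g(x^*),y_0-x_0\rangle+\langle g(x^*),y_0-x^*\rangle+\langle g(x^*),x^*-x_0\rangle.
\]
The third summand is $\leqslant 0$ by the variational inequality~\eqref{eq1} at $y=x_0$; the second is at most $f(y_0)-\tfrac{\mu}{2}\|y_0-x^*\|^2$ by testing the definition~\eqref{eq5} of $f(y_0)$ at the admissible point $y=x^*$; the first is at most $L\|y_0-x^*\|\cdot\|y_0-x_0\|$ by Cauchy--Schwarz combined with the Lipschitz bound~\eqref{eq3}. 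After these substitutions I would apply Young's inequality
\[
L\|y_0-x^*\|\cdot\|y_0-x_0\|\leqslant\frac{L^2}{2\mu}\|y_0-x^*\|^2+\frac{\mu}{2}\|y_0-x_0\|^2,
\]
so that the term $\tfrac{\mu}{2}\|y_0-x_0\|^2$ cancels $-\tfrac{\mu}{2}\|x_0-y_0\|^2$ exactly, and the residual coefficient of $\|y_0-x^*\|^2$ collapses to $\tfrac{L^2-\mu^2}{2\mu}=\tfrac{\mu(\gamma^2-1)}{2}$, which is the required bound.

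The delicate point of this plan is precisely the last step: both the particular three-term splitting of $\langle g(y_0),y_0-x_0\rangle$ (which isolates $\langle g(x^*),\,\cdot\,\rangle$ so that~\eqref{eq1} and~\eqref{eq5} can be applied separately) and the choice of the Young parameter equal to $\mu$ (rather than $L$ or $\sqrt{\mu L}$) are needed so that the $\|y_0-x_0\|^2$ contributions cancel and no spurious factor survives in front of $\|y_0-x^*\|^2$.
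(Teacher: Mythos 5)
Your proof is correct and follows exactly the route the paper intends: the two outer inequalities come from Theorem~\ref{th1}, and the central one from Corollary~\ref{cor1}. The only ingredient the paper leaves implicit (deferring to Theorem 3.2.3 of \cite{Nesterov_Doct}) is the bound $\Delta_0\leqslant f(y_0)+\frac{\mu(\gamma^2-1)}{2}\|y_0-x^*\|^2$, and your three-term splitting of $\langle g(y_0),y_0-x_0\rangle$ together with Young's inequality with parameter $\mu$ establishes it correctly.
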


Отметим, что оценки \eqref{equv11} -- \eqref{equv12} могут оказаться лучше \eqref{equvv11} -- \eqref{equvv12} из \cite{Nesterov_Doct}, поскольку $\frac{\hat{\beta}}{\mu}$ может оказаться меньше $\gamma$. Далее это наглядно продемонстрировано на примере численного эксперимента для задачи \eqref{Problem}.

\begin{remark}
Рассмотрим модификацию алгоритма \ref{alg2}, которая исключает уменьшение константы $\beta_{k+1}$ в ходе работы метода. Это
даёт возможность сделать вывод о несущественном увеличении трудоёмкости по сравнению с методом Ю. Е. Нестерова (алгоритм \ref{alg1}). Изначально положим $\lambda_0:=1$, $y_0$~--- некоторое начальное приближение искомого решения и выберем некоторое $0<\beta_0\leqslant2L$ (см. \eqref{eqnullbeta}), где $L$ --- константа Липшица для оператора $g$ из \eqref{eq3}.  Опишем $(k+1)$-ю итерацию предлагаемой модификации алгоритма \ref{alg2} $(k=0,1,2,\ldots)$.
\begin{algorithm}
\SetAlgoNoLine
\caption{Модификация алгоритма \ref{alg2}}
\label{alg3}
\begin{enumerate}
  \item $x_k:=\mathrm{arg}\max\limits_{x\in Q}\Phi_k(x), \;\beta_{k+1}:=\beta_k$.\\
  \item $y_{k+1}:=\mathrm{arg}\max\limits_{x\in Q}\varphi_{x_k}^{\beta_{k+1}}(x)$.\\
  \item {\bf Если} верно
    \begin{equation}\label{eq12}
    ||g(y_{k+1})-g(x_k)||_*\leqslant \sqrt{\beta_{k+1}(\beta_{k+1}+\mu)} \cdot ||y_{k+1}-x_k||,
    \end{equation}
    {\bf то} вычисляем $\lambda_{k+1}:=\frac{\mu}{\beta_{k+1}}S_k$, увеличиваем $k$ на 1 и переходим к следующей итерации (п. 1).\\
    {\bf Иначе} $\beta_{k+1}:=2\cdot\beta_{k+1}$ и переходим к п. 2.\\
    {\bf Выход:} $\widetilde{y}_{k+1}:=\frac{1}{S_{k+1}}\sum\limits_{i=0}^{k+1}\lambda_i y_i$.
\end{enumerate}.
\end{algorithm}
\newpage
Поскольку $\beta_{k+1}$ может лишь увеличиваться, то по сравнению с алгоритмом \ref{alg1} количество вычислений согласно п. 2 возрастёт лишь не более, чем на
\begin{equation}
\left\lceil\log_{2} \frac{2L}{\beta_0}\right\rceil.
\end{equation}
\end{remark}

Для демонстрации преимуществ алгоритмов \ref{alg2} и \ref{alg3} по сравнению с алгоритмом \ref{alg1} были проведены вычислительные эксперименты для вариационного неравенства с оператором $\displaystyle g:\mathbb{R}^{20}\rightarrow \mathbb{R}^{20}$ вида $$g(x_{1},x_{2},\ldots,x_{20})=$$
\begin{equation}\label{Problem}
= \left(\exp\left(x_{1}+\frac{x_{2}}{10 \exp(3)}\right), \exp\left(x_{2}+\frac{x_{3}}{10 \exp (3)}\right),\ldots, \exp\left(x_{20}+\frac{x_{1}}{10 \exp(3)} \right)\right).
\end{equation}
В качестве множества $Q$ выберем единичный шар с центром в нуле
$$
Q=\left\{x=(x_{1},x_{2},\ldots,x_{20})\;|\; x_{1}^{2}+x_{2}^{2}+\ldots+x_{20}^{2}\leqslant 1\right\}
$$
для стандартной евклидовой нормы в $\mathbb{R}^{20}$ (т.е. здесь оператор $B$ в \eqref{eq_norm} мы полагаем тождественным):
$$\|x\|:=\sqrt{x_{1}^{2}+x_{2}^{2}+\ldots+x_{20}^{2}}.$$

Пусть $x=(x_{1},x_{2},\ldots,x_{20})$ и $y=(y_{1},y_{2},\ldots,y_{20})$ --- два вектора из $Q$. Очевидно, что оператор $g$ не является потенциальным:
$$\frac{\partial}{\partial x_{2}}\left(\exp\left(x_{1}+\frac{x_{2}}{10 \exp(3)}\right)\right)=\frac{1}{10 \exp(3)} \exp\left(x_{1}+\frac{x_{2}}{10 \exp(3)}\right),
$$
$$\frac{\partial}{\partial x_{1}}\left(\exp\left(x_{2}+\frac{x_{3}}{10 \exp(3)}\right)\right)=0.$$

Покажем, что оператор $g$ удовлетворяет условию Липшица и сильно монотонен на $Q$. По теореме о среднем для произвольных $i,j=\overline{1,20}$ имеем:
$$
\exp\left(x_{i}+\frac{x_{j}}{10 \exp(3)}\right) - \exp\left(y_{i}+\frac{y_{j}}{10 \exp(3)}\right) = \exp\left(x_{i}+\frac{x_{j}}{10 \exp(3)}\right) - \exp\left(y_{i}+\frac{x_{j}}{10 \exp(3)} \right) +
$$
\begin{equation}\label{eq examp2}
+\exp\left(y_{i}+\frac{x_{j}}{10 \exp(3)}\right) - \exp\left(y_{i}+\frac{y_{j}}{10 \exp(3)} \right) = \exp\left(\alpha_{i}+\frac{x_{j}}{10 \exp(3)} \right)(x_{i}-y_{i})+
\end{equation}
$$
+\frac{1}{10 \exp(3)} \exp\left(y_{i}+\frac{\gamma_{j}}{10 \exp(3)}\right)(x_{j}-y_{j})
$$
для некоторых $\alpha_{i}$ и $\gamma_{j}:\;|\alpha_{i}|\leqslant 1$ и
$|\gamma_{j}|\leqslant 1$ ($\alpha_{i}$ и $\gamma_{j}$ лежат между $x_{i},y_{i}$ и $x_{j},y_{j}$ соответственно).
Ясно, что
$$\left|\alpha_{i}+\frac{x_{j}}{10 \exp(3)}\right|\leqslant\sqrt{1+\left(\frac{1}{10 exp(3)}\right)^{2}}\sqrt{\alpha_{i}^{2}+x_{j}^{2}}<\sqrt{2},$$
а также $\displaystyle\left|y_{i}+\frac{\gamma_{j}}{10 \exp(3)}\right|<\sqrt{2}$. Поэтому \eqref{eq examp2} означает, что
$$\left|\exp \left(x_{i}+\frac{x_{j}}{10 \exp(3)} \right) - \exp \left(y_{i}+\frac{y_{j}}{10 \exp(3)}\right)\right|<exp(\sqrt{2})|x_{i}-y_{i}|+ $$
$$
+ \exp (\sqrt{2}-3)\frac{|x_{j}-y_{j}|}{10} < \exp (\sqrt{2})\left(|x_{i}-y_{i}|+\frac{|x_{j}-y_{j}|}{10}\right).
$$
Далее, с учётом неравенства $(a+b)^2 \leqslant 2(a^2 + b^2)$, имеем
$$\|g(x)-g(y)\|^{2}<2exp(2\sqrt{2})\left(1+\frac{1}{100}\right)\|x-y\|^{2}\quad\forall\;x,y\in Q,$$
$$\|g(x)-g(y)\|<\frac{\sqrt{202} \exp(\sqrt{2})}{10}\|x-y\| \quad\forall\;x,y\in Q,$$
т.е. оператор $g$ удовлетворяет свойству Липшица с константой $\displaystyle L=\frac{\sqrt{202}}{10}\exp(\sqrt{2})$.

Далее, \eqref{eq examp2} означает, что для произвольных $x,y\in Q$
$$\langle g(x)-g(y),x-y\rangle=\sum_{k=1}^{20}c_{k}(x_{k}-y_{k})^{2}+\sum_{k=1}^{19}d_{k}(x_{k}-y_{k})(x_{k+1}-y_{k+1})+d_{20}(x_{20}-y_{20})(x_{1}-y_{1}),$$
где
$$\displaystyle c_{k}> \exp(-\sqrt{2}),\;d_{k}<\frac{\exp(\sqrt{2}-3)}{10}<\frac{\exp(-\sqrt{2})}{10}$$
для всякого $k=\overline{1,20}$.
Учитывая неравенство $\displaystyle 2 ab\leqslant a^{2}+b^{2}$, получаем
$$\langle g(x)-g(y),x-y\rangle> \exp (-\sqrt{2})\sum_{k=1}^{20}(x_{k}-y_{k})^{2}-\frac{\exp (-\sqrt{2})}{10}\sum_{k=1}^{20}(x_{k}-y_{k})^{2}=\frac{9}{10} \exp(-\sqrt{2})\|x-y\|^{2},$$
т.е. оператор $g$ сильно монотонен с параметром $\displaystyle \mu=\frac{9}{10} \exp (-\sqrt{2})$.

\begin{table}
\centering
\caption{"Сравнение результатов работы алгоритмов \ref{alg1} и \ref{alg2}".}
\label{tab1}
\begin{tabular}{|c|c|c|c|c|c|c|}
\hline
$N$  & $exp\left(\frac{-k}{1+\frac{L}{\mu}}\right)$ &Время,\;мс& $exp\left(\frac{-k}{1+\frac{\widehat{\beta}}{\mu}}\right)$ &Время,\;мс& $\beta_N$ & $\hat{\beta}$\\ \hline
3   & 8.9742e-01                           & 1    & 3.3880e-01                                     & 2     & 2.1447e-01   & 3.8766e-01  \\ \hline
6   & 8.0536e-01                           & 1    & 2.0270e-02                                     & 3     & 2.6809e-02   & 2.6809e-02  \\ \hline
9   & 7.2274e-01                           & 1    & 4.9199e-04                                     & 4     & 3.3512e-03   & 3.9726e-02  \\ \hline
12  & 6.4860e-01                           & 2    & 1.2773e-05                                     & 5     & 4.1889e-04   & 1.4210e-02  \\ \hline
15  & 5.8207e-01                           & 2    & 4.3275e-07                                     & 5     & 5.2362e-05   & 5.1801e-03  \\ \hline
18  & 5.2236e-01                           & 2    & 1.7770e-08                                     & 7     & 6.5452e-06   & 1.8911e-03  \\ \hline
21  & 4.6878e-01                           & 3    & 8.0981e-10                                     & 8     & 8.1815e-07   & 6.8756e-04  \\ \hline
24  & 4.2069e-01                           & 3    & 3.8794e-11                                     & 8     & 1.0227e-07   & 2.4877e-04  \\ \hline
27  & 3.7753e-01                           & 4    & 1.9004e-12                                     & 9     & 1.2784e-08   & 8.9622e-05  \\ \hline
30  & 3.3881e-01                           & 4    & 9.3990e-14                                     & 9     & 1.5980e-09   & 3.2176e-05  \\ \hline
33  & 3.0405e-01                           & 5    & 4.6670e-15                                     & 10    & 1.9974e-10   & 1.1521e-05  \\ \hline
36  & 2.7286e-01                           & 5    & 2.3211e-16                                     & 10    & 2.4968e-11   & 4.1168e-06  \\ \hline
39  & 2.4487e-01                           & 6    & 1.1551e-17                                     & 12    & 3.1210e-12   & 1.4687e-06  \\ \hline
42  & 2.1975e-01                           & 7    & 5.7501e-19                                     & 13    & 3.9013e-13   & 5.2327e-07  \\ \hline
45  & 1.9721e-01                           & 8    & 2.8626e-20                                     & 14    & 4.8766e-14   & 1.8625e-07  \\ \hline
\end{tabular}
\end{table}

\begin{table}
\centering
\caption{"Сравнение результатов работы алгоритмов \ref{alg1} и \ref{alg3}".}
\label{tab2}
\begin{tabular}{|c|c|c|c|c|c|c|}
\hline
$N$  & $exp\left(\frac{-k}{1+\frac{L}{\mu}}\right)$ &Время,\;мс& $exp\left(\frac{-k}{1+\frac{\widehat{\beta}}{\mu}}\right)$ &Время,\;мс& $\beta_N$ & $\hat{\beta}$\\ \hline
3   & 8.9742e-01                           & 1    & 7.1227e-01                                     & 2     & 1.7158e+00   & 1.7158e+00  \\ \hline
6   & 8.0536e-01                           & 1    & 5.0732e-01                                     & 3     & 1.7158e+00   & 1.7158e+00  \\ \hline
9   & 7.2274e-01                           & 1    & 3.6135e-01                                     & 4     & 1.7158e+00   & 1.7158e+00  \\ \hline
12  & 6.4860e-01                           & 2    & 2.5738e-01                                     & 5     & 1.7158e+00   & 1.7158e+00  \\ \hline
15  & 5.8207e-01                           & 2    & 1.8332e-01                                     & 5     & 1.7158e+00   & 1.7158e+00  \\ \hline
18  & 5.2236e-01                           & 2    & 1.3057e-01                                     & 7     & 1.7158e+00   & 1.7158e+00  \\ \hline
21  & 4.6878e-01                           & 3    & 9.3003e-02                                     & 8     & 1.7158e+00   & 1.7158e+00  \\ \hline
24  & 4.2069e-01                           & 3    & 6.6243e-02                                     & 8     & 1.7158e+00   & 1.7158e+00  \\ \hline
27  & 3.7753e-01                           & 4    & 4.7183e-02                                     & 9     & 1.7158e+00   & 1.7158e+00  \\ \hline
30  & 3.3881e-01                           & 4    & 3.3607e-02                                     & 9     & 1.7158e+00   & 1.7158e+00  \\ \hline
33  & 3.0405e-01                           & 5    & 2.3937e-02                                     & 10    & 1.7158e+00   & 1.7158e+00  \\ \hline
36  & 2.7286e-01                           & 5    & 1.7049e-02                                     & 10    & 1.7158e+00   & 1.7158e+00  \\ \hline
39  & 2.4487e-01                           & 6    & 1.2144e-02                                     & 12    & 1.7158e+00   & 1.7158e+00  \\ \hline
42  & 2.1975e-01                           & 7    & 8.6496e-03                                     & 13    & 1.7158e+00   & 1.7158e+00  \\ \hline
45  & 1.9721e-01                           & 8    & 6.1608e-03                                     & 14    & 1.7158e+00   & 1.7158e+00  \\ \hline
\end{tabular}
\end{table}

Мы применили алгоритмы \ref{alg1}, \ref{alg2} и \ref{alg3} к вариационному неравенству для оператора $g$ из \eqref{Problem} с параметрами $L = \frac{\sqrt{202}}{10}\exp(\sqrt{2})$, $\mu=\frac{9}{10} \exp (-\sqrt{2})$, начального приближения $y_0 = (0.2, 0.2, ..., 0.2) \in Q$ и в соответствии с \eqref{eqnullbeta}
\begin{equation}\label{eqexampnullbeta}
\beta_0 = \frac{\|g(1, 0, 0, ..., 0) - g(0, 1, 0, ..., 0)\|}{\sqrt{2}}
\end{equation}
для стандартной евклидовой нормы в $\mathbb{R}^{20}$.

Результаты сравнения работы алгоритмов \ref{alg1} и \ref{alg2} (а также алгоритмов \ref{alg1} и \ref{alg3}) представлены в сравнительных таблицах \ref{tab1} и \ref{tab2}, где $N$ --- количество итераций работы этих алгоритмов, время работы алгоритмов указано в миллисекундах. Все вычисления были произведены с помощью CPython 3.6.4 на компьютере с 3-ядерным процессором AMD Athlon II X3 450 с тактовой частотой 803,5 МГц на каждое ядро. ОЗУ компьютера составляла 8 Гб.

Как видим из таблицы \ref{tab1}, скорость сходимости для предлагаемого нами алгоритма \ref{alg2} существенно выше скорости сходимости алгоритма \ref{alg1}. Это получается за счёт значительного уменьшения констант $\beta_N$ на итерациях в ходе работы алгоритма, а также предлагаемого нами их усреднения в \eqref{mean_beta}. Из таблицы \ref{tab2} видим, что скорость сходимости для предлагаемого нами алгоритма \ref{alg3} выше, чем для алгоритма \ref{alg1}, но уже не так существенно, как для алгоритма \ref{alg2}. При этом время работы алгоритма \ref{alg3} меньше, чем время работы алгоритма \ref{alg2}. По сути, преимущество алгоритма \ref{alg3} перед алгоритмом \ref{alg1} для рассматриваемого примера определяется, прежде всего, возможностью выбора начальной константы $\beta_0$ \eqref{eqexampnullbeta} согласно предлагаемому нами способу в замечании 2 без использования какой-либо оценки $\hat{L} \geqslant L$ константы Липшица $L$ оператора $g$.

\bigskip

{\bf Благодарности}. Автор выражает огромную благодарность Александру Владимировичу Гасникову и Юрию Евгеньевичу Нестерову, а также неизвестным рецензентам за полезные обсуждения и комментарии.

\end{document}